\documentclass{amsart}

\usepackage{amscd,amsmath,amssymb,amsfonts,bbm, calligra}
\usepackage[T1]{fontenc}
\usepackage{lmodern}
\usepackage{mathtools}
\usepackage{enumerate}
\usepackage{multicol}
\usepackage[all]{xy}
\usepackage{mathrsfs}
\usepackage{footmisc}
\usepackage[unicode,pdfborder={0 0 0},final]{hyperref}

\newtheorem{thm}{Theorem}[section]
\newtheorem{prop}[thm]{Proposition}

\newtheorem{lem}[thm]{Lemma}

\theoremstyle{definition}

\theoremstyle{remark}
\newtheorem{rem}[thm]{Remark}

\numberwithin{equation}{section}

\newcommand{\Gys}{\mathrm{Gys}}
\newcommand{\cl}{\mathrm{cl}}
\newcommand{\cd}{\mathrm{cd}}
\newcommand{\res}{\mathrm{res}}
\newcommand{\et}{\mathrm{\acute{e}t}}
\newcommand{\red}{\mathrm{red}}
\newcommand{\ret}{\mathrm{r\acute{e}t}}
\newcommand{\Spec}{\mathrm{Spec}}
\newcommand{\Frac}{\mathrm{Frac}}
\newcommand{\RR}{\mathrm{R}}

\newcommand{\cupp}{\mkern-1mu\smile\mkern-1mu}

\makeatletter
\def\myrightarrow{{\setbox\z@\hbox{$\rightarrow$}\dimen0\ht\z@\multiply\dimen0 6\divide\dimen0 10\ht\z@\dimen0\box\z@}}
\def\myrightarrowfill@{\arrowfill@\relbar\relbar\myrightarrow}
\newcommand{\myxrightarrow}[2][]{\ext@arrow 0359\myrightarrowfill@{#1}{#2}}
\makeatother

\def\bG{\mathbf G}
\def\bmu{\boldsymbol\mu}

\def\cL{\mathcal{L}}
\def\cO{\mathcal{O}}

\def\Z{\mathbb Z}

\def\F{\mathbb F}

\def\Q{\mathbb Q}

\def\R{\mathbb R}
\def\bN{\mathbb N}

\begin{document}

\title[Sums of squares in function fields over Henselian local fields]
{Sums of squares in function fields over Henselian local fields}

\author{Olivier Benoist}
\address{D\'epartement de math\'ematiques et applications, \'Ecole normale sup\'erieure, CNRS,
45 rue d'Ulm, 75230 Paris Cedex 05, France}
\email{olivier.benoist@ens.fr}

\renewcommand{\abstractname}{Abstract}
\begin{abstract}
We give upper bounds for the level and the Pythagoras number of function fields over fraction fields of integral Henselian excellent local rings. In particular, we show that the Pythagoras number of $\R((x_1,\dots,x_n))$ is $\leq 2^{n-1}$, which answers positively a question of Choi, Dai, Lam and Reznick.
\end{abstract}
\maketitle

\section*{Introduction}
\label{intro}

  In \cite[Satz 4]{Artin}, Artin proved that a real rational function $f\in\R(x_1,\dots,x_n)$ which does not take negative values is a sum of squares in $\R(x_1,\dots,x_n)$, thus solving Hilbert's 17th problem.
 It is natural to wonder about the number of squares required to write $f$ as a sum of squares. To study this question, one introduces the \textit{Pythagoras number} $p(K)\in\bN\cup\{+\infty\}$ of a field $K$: it is the smallest integer $p\in\bN$ such that all sums of squares in $K$ are sums of $p$ squares if such an integer exists, and $+\infty$ otherwise. Pfister \cite[Theorem 1]{Pfistersumme} was able to show that $p(\R(x_1,\dots,x_n))\leq 2^n$; as a consequence, a real rational function $f\in\R(x_1,\dots,x_n)$ that does not take negative values is a sum of $2^n$ squares in $\R(x_1,\dots,x_n)$.

A related invariant is the \textit{level} $s(K)\in\bN\cup\{+\infty\}$ of a field $K$: the smallest integer $s\in\bN$ such that $-1$ is a sum of $s$ squares in $K$, if such an integer exists, and $+\infty$ otherwise. By Artin and Schreier \cite[Satz 7b]{AS}, the level $s(K)$ is infinite if and only if $K$ admits a field ordering ($K$ is then said to be \textit{formally real}). Pfister has shown that if $s(K)$ is finite, then it is a power of $2$ \cite[Satz 4]{Pfisterlevel}, and that if  $K$ is moreover a field of transcendence degree $n$ over $\R$, then $s(K)\leq 2^n$ \cite[Theorem~2]{Pfistersumme}.

We refer to \cite[Chapters VIII and XI]{Lam} and \cite{Pfisterbook} for nice accounts of these results.

\vspace{1em}

As a particular case of our main statement (Theorem \ref{thmain} below), we obtain local analogues of Pfister's aforementioned theorems \cite[Theorems 1 and 2]{Pfistersumme}.

\begin{thm}
\label{consmain}
Fix $n\geq 1$ and let $K:=\R((x_1,\dots,x_n))$.

\begin{enumerate}[(i)]
\item One has $p(K)\leq 2^{n-1}$. 

\item If a finite extension $F$ of $K$ is not formally real, then ${s(F)\leq 2^{n-1}}$.
\end{enumerate}
\end{thm}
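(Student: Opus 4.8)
The plan is to realize $K$ as the fraction field of a ring to which the main theorem applies. Write $R:=\R[[x_1,\dots,x_n]]$, so that $K=\Frac(R)$. The ring $R$ is a domain; it is local with maximal ideal $(x_1,\dots,x_n)$ and residue field $\R$; it has Krull dimension $n$; and, being a complete Noetherian local ring, it is both excellent and Henselian. Thus $R$ is an integral Henselian excellent local ring of dimension $n$ whose residue field is the real closed field $\R$. Statement (i) is then the instance of Theorem \ref{thmain} applied to $K$ itself, viewed as a (transcendence degree $0$) function field over $\Frac(R)$; and statement (ii) is the instance applied to the finite extension $F/K$, which is again a finitely generated extension of $\Frac(R)$ of transcendence degree $0$. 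In both cases it is the dimension $n$ of $R$ that yields the exponent $n-1$ in the bound $2^{n-1}$. Once Theorem \ref{thmain} is available, the only thing to verify is the short list of ring-theoretic properties of $R$ recorded above, so at this level there is no genuine obstacle: all the work has been transferred into the main theorem.

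It is worth recording why a naive, self-contained induction does not suffice. One is tempted to embed $K$ into the iterated Laurent series field $L:=\R((x_1))\cdots((x_n))$—a tower of complete discretely valued fields over $\R$—and to control it one variable at a time via Springer's isomorphism $W(k((t)))\cong W(k)\oplus W(k)$. But $K=\Frac(\R[[x_1,\dots,x_n]])$ is a proper subfield of $L$, and a bound on $p(L)$ says nothing about $p(K)$: an element of $K$ that is a sum of $p(L)$ squares in $L$ need not be a sum of comparably few squares using only elements of $K$, so the inequality runs the wrong way. The ring $\R[[x_1,\dots,x_n]]$ is an $n$-dimensional regular local ring, not an iterated discrete valuation ring, and its sums of squares cannot be disassembled one variable at a time. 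This is precisely why a global, cohomological input such as Theorem \ref{thmain} is needed, and where all the difficulty resides.

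Finally, it is reassuring to verify the base case $n=1$ by hand, since it already exhibits the exponent $2^{0}=1$. A nonzero sum of squares $f\in\R((x))$ has even $x$-adic valuation, and its leading coefficient, being a nonzero sum of squares in $\R$, is positive; writing $f=c\,x^{2\ell}(1+xh)$ with $c>0$ and $h\in\R[[x]]$, the $1$-unit $1+xh$ is a square in $\R[[x]]$ by Hensel's lemma while $c=(\sqrt c)^2$, so $f$ is a single square and $p(\R((x)))=1$. For (ii) with $n=1$, a finite extension $F$ of $\R((x))$ that is not formally real must have, for the unique valuation extending the $x$-adic one, a finite nonreal residue extension of $\R$, namely $\C$; by completeness $\C\hookrightarrow F$, whence $-1=i^2$ in $F$ and $s(F)=1$. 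Both values agree with $2^{n-1}$, as they must.
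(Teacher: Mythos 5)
Your overall strategy is exactly the paper's: the paper's entire proof of Theorem \ref{consmain} is the observation that it follows from Theorem \ref{thmain} by taking $A=\R[[x_1,\dots,x_n]]$. Your verification that $R=\R[[x_1,\dots,x_n]]$ is an integral Henselian excellent local ring of dimension $n$ with residue field $\R$ is correct (complete Noetherian local rings are indeed Henselian and excellent), and your deduction of statement (ii) is fine: a finite extension $F$ of $K$ has transcendence degree $m=0$ over $\Frac(R)$, so Theorem \ref{thmain}~(i) gives $s(F)\leq 2^{n+0+0-1}=2^{n-1}$, using $\delta=0$.

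However, your derivation of statement (i) is mis-routed, and as phrased the step fails. The field $K=\R((x_1,\dots,x_n))$ is formally real, so if you apply Theorem \ref{thmain} to $K$ ``viewed as a transcendence degree $0$ function field over $\Frac(R)$'', the clause that applies is (ii) of that theorem, and it yields only $p(K)\leq 2^{n+0+0}-1=2^{n}-1$, which is strictly weaker than $2^{n-1}$ for $n\geq 2$. The bound $2^{n-1}$ comes from Theorem \ref{thmain}~(iii), which is not a statement about function fields over $\Frac(A)$ but about $\Frac(A)$ itself, and which carries two extra hypotheses: $A$ must be \emph{regular} and the residue field $k$ must be \emph{formally real}. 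Both hold here---$\R[[x_1,\dots,x_n]]$ is a regular local ring and $\R$ is formally real---but regularity is absent from your ``short list'' of properties to check, and nothing in your write-up indicates you were invoking clause (iii). You should also make explicit the computation $\delta=0$: since $\R$ is real closed, $\R[\sqrt{-1}]=\C$ is algebraically closed, so $\cd_2(\C)=0$; this, together with $m=0$, is what turns the exponents $n+m+\delta-1$ and $n+\delta-1$ into $n-1$ (your phrase ``it is the dimension $n$ of $R$ that yields the exponent'' silently uses it). With the citation corrected to Theorem \ref{thmain}~(iii) and regularity added to the list, your argument coincides with the paper's; your side remarks (the failure of the naive reduction to $\R((x_1))\cdots((x_n))$, and the hand verification for $n=1$) are correct but not needed.
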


Theorem \ref{consmain} (i) was conjectured by Choi, Dai, Lam and Reznick \cite[\S 9, Problem~6 and below]{CDLR}. It was proven by them when $n\leq 2$ \cite[Corollary 5.14]{CDLR} and by Hu when $n=3$ \cite[Theorem 1.2]{Hupyth}.
In addition, Theorem \ref{consmain} (ii) had already been proven by Hu for $n=2$ \cite[Theorem 5.1]{Hupyth}.

Pfister's inequalities $p(\R(x_1,\dots,x_n))\leq 2^n$ are not known to be optimal (see \cite[\S 4~Problem 1]{PfisterICM}). The best result to date is the theorem of Cassels, Ellison and Pfister~\cite{CEP} according to which $p(\R(x_1,x_2))=4$. 
We do not know if the bounds stated in Theorem~\ref{consmain} are optimal either. They are however the best possible under the assumption that Pfister's bounds are optimal (see \cite[Corollary~2.3]{Hupyth} and Proposition~\ref{propopt}). This line of thought had already been exploited by Hu \cite[Theo\-rem~1.2]{Hupyth} to show the equality ${p(\R((x_1,x_2,x_3)))=4}$.

\vspace{1em}

In Theorem \ref{thmain}, we consider more generally function fields $F$ over the fraction field of an integral Henselian excellent local ring $A$ of dimension $\geq 1$. In this setting, our bounds depend on the dimension of $A$, on the transcendence degree of $F$ over $\Frac(A)$, as well as on the \textit{virtual cohomological $2$-dimension} $\cd_2(k[\sqrt{-1}])$ of the residue field $k$ of $A$, which is defined as the cohomological $2$-dimension
of the absolute Galois group of the field $k[\sqrt{-1}]$ in the sense of \cite[I \S 3.1]{CohoGalois}.

\begin{thm}
\label{thmain}
Let $A$ be an integral Henselian excellent local ring of dimension ${n\geq 1}$ whose residue field $k$ has characteristic $0$ and 
satisfies $\cd_2(k[\sqrt{-1}])\leq \delta$. Let $F$ be a field of transcendence degree $m$ over $K:=\Frac(A)$.
\begin{enumerate}[(i)]
\item If $F$ is not formally real, then $s(F)\leq 2^{n+m+\delta-1}$ and $p(F)\leq 2^{n+m+\delta-1}+1$.
\item If $F$ is formally real, $p(F)\leq 2^{n+m+\delta}-1$.
\item If $A$ is regular and $k$ is formally real, then $p(K)\leq 2^{n+\delta-1}$.
\end{enumerate}
\end{thm}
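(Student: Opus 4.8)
The plan is to reduce all three statements to the (non)vanishing of explicit Galois cohomology classes, controlled by the cohomological dimension of $F[\sqrt{-1}]$. In the non-real case (i), I would first invoke the elementary inequality $p(F)\le s(F)+1$, valid for any non-real field, so that only the level needs attention. Writing $P_t$ for the $t$-fold Pfister form $\langle\langle-1,\dots,-1\rangle\rangle\cong 2^t\langle1\rangle$, the bound $s(F)\le 2^t$ is equivalent to the isotropy, hence the hyperbolicity, of $P_{t+1}$; by the Milnor conjecture together with the Arason--Pfister Hauptsatz this is in turn equivalent to the vanishing of the cup power $(-1)^{\cupp(t+1)}\in H^{t+1}(F,\Z/2)$. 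Thus part (i) amounts to proving $(-1)^{\cupp(n+m+\delta)}=0$, and part (i) for $p$ then follows from $p\le s+1$.

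The next step is a cohomological dimension estimate: I claim $\cd_2(F[\sqrt{-1}])\le n+m+\delta$. This I would assemble from three contributions: the hypothesis $\cd_2(k[\sqrt{-1}])\le\delta$; a gain of $1$ for each of the $n$ dimensions of the Henselian excellent local ring $A$, obtained by filtering $A$ through a system of parameters and exploiting the Henselian discrete-valuation structure at successive height-one primes, with excellence guaranteeing well-behaved residue fields; and a gain of $1$ for each of the $m$ transcendence degrees via Serre's theorem $\cd_2(L(t))=\cd_2(L)+1$. Since $F$ is non-real its absolute Galois group is $2$-torsion-free, so $\cd_2(F)=\cd_2(F[\sqrt{-1}])\le n+m+\delta$ by Serre's theorem on torsion-free profinite groups.

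This bound by itself only yields $(-1)^{\cupp(n+m+\delta+1)}=0$, i.e.\ $s(F)\le 2^{n+m+\delta}$, so the crux --- and the step I expect to be the \emph{main obstacle} --- is to gain one further degree and kill the top class $(-1)^{\cupp(n+m+\delta)}$ itself. Because $-1$ is a unit for every divisorial valuation, this class is unramified, and my plan is to spread $F$ out as the function field of a regular scheme over $A$, using the excellence of $A$ and resolution of singularities (available in residue characteristic $0$), and then to specialise the unramified class to the special fibre via the Henselian property of $A$. This transports the question to the residue field $k$, where $\cd_2(k[\sqrt{-1}])\le\delta$ and the propagation of non-reality (no ordering can support the class) force the vanishing. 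Making the residue/purity and specialisation argument precise, while keeping exact numerical track so that \emph{exactly one} degree is gained, is the delicate heart of the proof.

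For the formally real cases I would pass to the reduced theory. In (ii), a totally positive $a\in F$ is a sum of $N$ squares as soon as the $(N+1)$-dimensional form $N\langle1\rangle\perp\langle-a\rangle$ is isotropic, and this form is totally indefinite; hence it suffices to bound the Hasse (reduced $u$-)invariant of $F$ by something of order $2^{n+m+\delta}$, which follows from $\mathrm{vcd}_2(F)=\cd_2(F[\sqrt{-1}])\le n+m+\delta$ via Scheiderer's comparison between virtual cohomological dimension and the reduced Witt ring, the precise constant $2^{n+m+\delta}-1$ emerging from the relation between the Pythagoras number and the Hasse number. Part (iii) is the real counterpart of the factor-$2$ gain in (i): when $A$ is regular and $k$ is formally real I would argue by induction on $n$, peeling off one regular parameter $\pi$ at a time and using the second residue homomorphism (Springer's theorem) to relate the reduced invariants of $K=\Frac(A)$ to those of $\Frac(A/(\pi))$, an $(n-1)$-dimensional regular Henselian local ring with the same formally real residue field $k$; the formal reality of $k$ feeds the induction and produces the sharp exponent $n+\delta-1$. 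As in (i), the subtle point is the extra factor of $2$, here extracted from the regularity of $A$.
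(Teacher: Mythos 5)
Your reduction of (i) to the vanishing of $\{-1\}^{n+m+\delta}\in H^{n+m+\delta}(F,\Z/2)$ and the use of $p(F)\le s(F)+1$ match the paper (Proposition \ref{carrecoho}), but the step you explicitly defer --- gaining one degree beyond a cohomological-dimension bound --- is not a detail to be ``made precise'': it is the entire content of the theorem, and your sketch contains no mechanism that could produce it. First, no bound on $\cd_2$ can suffice: for $K=\R((x_1,\dots,x_n))$ one has $\cd_2(K[\sqrt{-1}])=n$ exactly (the symbol $\{x_1\}\cupp\dots\cupp\{x_n\}$ is nonzero, as iterated residues show), so the group $H^{n}(K,\Z/2)$ is nonzero and one must kill the particular class $\{-1\}^{n}$, not the whole group. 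The paper's mechanism is geometric: take a regular projective model $X\to\Spec(A)$ of $F$ whose reduced special fiber $E$ is a simple normal crossings divisor, use a Bertini theorem over the local base (Proposition \ref{Bertini}) to find a regular divisor $D$ with $D\cup E$ SNC and $X\setminus D$ affine; then $U:=(X_s\setminus D_s)^{\red}$ is an \emph{affine} $k$-variety of dimension $\le d-1$ (where $d=n+m$), so Artin's affine vanishing theorem gives $\cd_2(U_{k[\sqrt{-1}]})\le d-1+\delta$, Scheiderer's theorem upgrades this to $\cd_2(U)\le d-1+\delta$ because $U_r=\varnothing$ ($F$ is not formally real, Lemma \ref{gener}), and Saito--Sato purity plus proper base change (Proposition \ref{purity} (iii)) give $H^{d+\delta}_{\et}(X\setminus D,\Z/2)\simeq H^{d+\delta}_{\et}(U,\Z/2)=0$; since $-1$ is a global unit, $\{-1\}^{d+\delta}$ lies in this group and hence dies in $H^{d+\delta}(F,\Z/2)$. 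The ``$-1$'' in the exponent comes precisely from the affineness of $U$: specializing an unramified class to the special fiber, as you propose, lands on a \emph{proper} fiber of dimension $d-1$, whose $\cd_2$ can be as large as $2(d-1)+\delta$, so the count never closes. (Your auxiliary claim $\cd_2(F[\sqrt{-1}])\le n+m+\delta$ is itself a hard theorem of Gabber--Orgogozo, and your system-of-parameters sketch would not prove it, since localizations of $A$ at height-one primes are not Henselian; in any case the paper never needs this bound.)

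For (ii), your route rests on a false implication: finite virtual cohomological $2$-dimension does not bound the Hasse number. The field $\R(x,y)$ has $\cd_2(\R(x,y)[\sqrt{-1}])=2$, yet it fails the strong approximation property, so its Hasse number is infinite by Prestel's theorem; isotropy of general totally indefinite forms is not governed by $\cd_2(F[\sqrt{-1}])$, and even the true statement in this direction (vanishing of torsion Pfister forms in high powers of the fundamental ideal) would only give $p(F)\le 2^{n+m+\delta}$, not $2^{n+m+\delta}-1$. The paper's (ii) is instead a two-line reduction to (i): for $a$ a sum of squares, $L:=F[\sqrt{-a}]$ is not formally real and has the same transcendence degree over $K$, so $s(L)\le 2^{n+m+\delta-1}$, and the classical relation between $s(F[\sqrt{-a}])$ and the number of squares needed to represent $a$ yields $2^{n+m+\delta}-1$. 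For (iii), ``peeling off regular parameters via Springer's theorem'' fails on three counts: the discrete valuation ring $A_{(\pi)}$ is not Henselian, so Springer's theorem does not apply to it; a sum of squares in $K$ can have \emph{odd} valuation along a divisor $D\subset\Spec(A)$ whose residue field is not formally real, and such divisors are handled in the paper by applying part (i) to $\cO(D)$ --- this, not an induction on parameters, is where the formal reality of $k$ enters; and treating one divisor at a time cannot conclude --- once all residues of $\alpha=\{-1\}^{n+\delta-1}\cupp\{a\}$ vanish, the paper must lift $\alpha$ to $H^{n+\delta}_{\et}(\Spec(A),\Z/2)$ using Panin's proof of the Gersten conjecture, and then kill the lift via proper base change, Scheiderer's real \'etale comparison, and the density of $\Spec(K)_r$ in the real spectrum of $\Spec(A)$ (Lemma \ref{injec}). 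None of these ingredients appears in your sketch.
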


Theorem \ref{consmain} follows from Theorem \ref{thmain} by taking $A=\R[[x_1,\dots,x_n]]$.

The assumption that $k$ has characteristic $0$ in Theorem \ref{thmain} is not a significant restriction, as there are trivial upper bounds for $s(F)$ and $p(F)$ otherwise.
 (If $k$ has characteristic $p\geq 3$, then $s(k)\leq s(\F_p)\leq 2$, so that $s(F)\leq s(\Frac(A))\leq 2$ by henselianity, and $p(F)\leq 3$ by \cite[XI, Theorem 5.6~(2)]{Lam}. A similar argument shows that $s(F)\leq 4$ and $p(F)\leq 5$ if $k$ has characteristic $2$.)


\vspace{1em}

The Pythagoras numbers $p(F)$ of function fields $F$ over Henselian local fields as above had previously been studied in the literature for low values of $n$ and $m$. We refer to Becher, Grimm and Van Geel \cite[\S 6]{BGvG} for an analysis of the $n=m=1$ case, and to Hu's articles \cite{Hupyth, HuHasse} for various results when $n+m\leq 3$.

 A striking feature of these works is that the hypotheses made on the residue field~$k$ of $A$ are much weaker than ours: the authors only need to control sums of squares in function fields over $k$ (see for example \cite[Theorem 6.8]{BGvG}, \cite[Theo\-rem~1.1]{Hupyth} or \cite[Theorem 1.4]{HuHasse}), and not the whole cohomological $2$-dimension of~$k[\sqrt{-1}]$. We believe that our stronger hypothesis is key in obtaining higher-dimensional results.

  Let us illustrate this difference with the example of the field $F:=\Q((x_1,\dots,x_n))$ for $n\geq 3$. What was known before, as an application of Pfister's work and of the Milnor conjectures, is the inequality $p(F)\leq 2^{n+2}$ (see \cite[beginning of \S 5]{Hupyth}). Theorem~\ref{thmain}~(iii) improves on this result by showing that $p(F)\leq 2^{n+1}$. On the other hand, combining \cite[Conjecture~5.4]{Hupyth} and Jannsen's theorem \cite[Corollary~0.7]{Jannsen} yields the optimistic conjecture that $p(F)\leq 2^n$, which is only known for $n=3$ (see \cite[Corollary 4.7 (ii)]{HuHasse}).

\vspace{1em}

We prove Theorem \ref{thmain} (i) in \S\ref{level}. Our main tool is a variant of the Lefschetz-type vanishing theorem of Saito and Sato \cite[Theorem 3.2 (1)]{SS}, and the relevant material is gathered in \S\ref{preliminaries}.  Assertions (ii) and (iii) of Theorem \ref{thmain} are consequences of Theorem \ref{thmain} (i), as we show in \S\ref{pythagoras}. 
The former is easy, and the latter relies prominently on Panin's proof of the Gersten conjecture for regular schemes of charac\-teristic $0$ \cite[Theorem C]{Panin}. 
The optimality of Theorem \ref{consmain} is discussed~in~\S\ref{optimality}.

\vspace{1em}

\subsection*{Acknowledgements}

Our use of the Saito--Sato vanishing theorem was inspired by the article \cite{KS} of Kerz and Saito (see \emph{loc.\ cit.}, (3.11)). We are also grateful to Yong Hu for useful comments.

\subsection*{Notation and conventions}
A \textit{variety} over a field $k$ is a separated scheme of finite type over $k$. We use $k[\sqrt{-1}]$ as a notation for $k$ if $-1$ is a square in $k$ and for $k[T]/(T^2+1)$ otherwise.
We let $\cd_2(X)$ be the cohomological $2$-dimension of the \'etale site of a scheme $X$ (see \cite[Definition 7.1]{Scheiderer}). If $k$ is a field, we use the notation $\cd_2(k):=\cd_2(\Spec(k))$.

If $X$ is a scheme and $x\in X$ is a point, we let $\kappa(x)$ be the residue field of $X$ at~$x$.
The \textit{real spectrum} $X_r$ of a scheme $X$ is the set of pairs $(x,\prec)$, where $x\in X$ and $\prec$ is a field ordering of $\kappa(x)$, endowed with its natural topology \cite[(0.4)]{Scheiderer}. 

A reduced Cartier divisor~$D$ in a regular scheme $X$ is said to have \textit{simple normal crossings} if for all $c\geq 1$ and any collection $D_1,\dots,D_c$ of distinct irreducible components of $D$, the scheme-theoretic intersection $D_1\cap\dots\cap D_c$ is either empty or regular of codimension $c$ in $X$. 

If $S$ is a local scheme with closed point $s\in S$, 
and $\pi:X\to S$ is a morphism, we denote by $X_s:=\pi^{-1}(s)$ the special fiber of $\pi$.
If $S$ is quasi-excellent and $\kappa(s)$ has characteristic $0$, then separated schemes of finite type over $S$ and coherent ideal sheaves on them admit resolutions of singularities (Hironaka's theorems \cite{Hironaka} apply as indicated p.151 of \emph{loc.\ cit.}, see also \cite[Theorem 1.1.11]{Temkin}).


\section{Preliminaries}
\label{preliminaries}

We gather here two results that will be used in the proof of Theorem \ref{thmain} (i).

\subsection{A purity result of Saito and Sato}

If $i:D\to X$ is the inclusion of a Cartier divisor in a Noetherian scheme $X$, and if $N\geq 1$ is invertible on $X$, we let $\cl_{X,N}(D)\in H^2_{\et,D}(X,\bmu_N)$ be the \textit{cycle class} of $D$ in~$X$ \cite[\S 2.1]{Cycle}. In view of the canonical isomorphism $H^2_{\et,D}(X,\bmu_N)=H^2_{\et}(D,\RR i^!\bmu_N)$, it gives rise to a morphism $\Gys_{i,N}:\Z/N\to \RR i^!\bmu_N[2]$ in $D^+_{\et}(D)$ called the \textit{Gysin morphism}. Gabber's absolute purity theorem (see \cite[Theorem~2.1.1]{Fujiwara} or \cite[Th\'eor\`eme 3.1.1]{Rioupurity}) implies that $\Gys_{i,N}$ is an isomorphism if $X$ and $D$ are regular. Building on Gabber's theorem, and extending earlier results of Rapoport and Zink \cite[Lemma 2.18, Satz~2.19]{RZ}, Saito and Sato \cite[Lemma 3.4]{SS} have proven (a variant of) the following statement.

\begin{prop}
\label{purity}
Let $X$ be a regular Noetherian scheme, and let $N\geq 1$ be invertible on $X$. Let $D$ and $E$ be two Cartier divisors on $X$ that have no irreducible component in common, such that $D$ is regular, and such that $D\cup E$ is a simple normal crossings divisor on~$X$. We let $i:D\to X$, $j:X\setminus D\to X$, $i':D\cap E\to E$ and $j':E\setminus (D\cap E)\to E$ be the natural inclusions.

\begin{enumerate}[(i)]
\item
The Gysin morphism $\Gys_{i',N}$ is an isomorphism.
\item 
The restriction morphism $(\RR j_*\Z/N)|_E\to \RR j'_{*}\Z/N$ is an isomorphism.
\item Assume moreover that $X$ is proper over a local Henselian Noetherian scheme, and that $E$ is the reduced special fiber of $X$. Then, for all $q,l\in \Z$, the restriction maps $H^q_{\et}(X\setminus D,\bmu_N^{\otimes l})\to H^q_{\et}(E\setminus (D\cap E),\bmu_N^{\otimes l})$ are isomorphisms.
\end{enumerate}
\end{prop}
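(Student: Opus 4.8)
The plan is to treat (i) as the essential local input, to deduce (ii) from (i) and Gabber's theorem by comparing two localisation triangles, and to obtain (iii) from (ii) by combining proper base change with the fact that the \'etale cohomology of a Henselian local scheme is that of its closed point. The heart of the matter is (i); the rest is six-functor formalism.

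\emph{Assertion (i).} As $\Gys_{i',N}$ is a morphism of complexes on $D\cap E$, it may be tested on strict henselisations, so I would reduce to $X=\Spec(A)$ with $A$ regular strictly henselian local, $D=V(t_1)$ and $E=V(t_2\cdots t_{c+1})$ for part of a regular system of parameters $t_1,\dots,t_d$ and some $c\geq1$; the hypotheses that $D$ is regular, that $D$ and $E$ have no common component, and that $D\cup E$ has simple normal crossings are exactly what yield such coordinates. The content is then a purity statement for the Cartier divisor $D\cap E$ in the (typically singular) simple normal crossings scheme $E$, which I would prove by d\'evissage on the number $c$ of branches. For $c=1$ the scheme $E$ is regular, $(E,D\cap E)$ is a regular pair, and $\Gys_{i',N}$ is an isomorphism by Gabber's theorem. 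For $c\geq2$, write $E=E'\cup E''$ with $E'=V(t_2)$ and $E''=V(t_3\cdots t_{c+1})$; the Mayer--Vietoris triangle attached to this closed cover expresses the Gysin data on $E$ in terms of the analogous data on $E'$, $E''$ and $E'\cap E''$, each of which is regular or has fewer branches, so the inductive hypothesis applies. Verifying that the cycle classes, hence the Gysin morphisms, are compatible with this Mayer--Vietoris triangle is the main obstacle, and is where the arguments of Rapoport--Zink and Saito--Sato are genuinely used.

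\emph{Assertion (ii).} Let $i:D\to X$, $i':D\cap E\to E$, and let $k:E\to X$ and $k':D\cap E\to D$ be the remaining inclusions, so that $(k,i,i',k')$ is a Cartesian square. The open--closed decompositions furnish localisation triangles
\[
\RR i_*\RR i^!\Z/N\to \Z/N\to \RR j_*\Z/N\xrightarrow{+1},\qquad \RR i'_*\RR i'^!\Z/N\to \Z/N\to \RR j'_*\Z/N\xrightarrow{+1}.
\]
Applying $(-)|_E=k^*$ to the first and comparing with the second via the base-change morphisms produces a morphism of triangles that is the identity on the middle term, so it suffices to prove that $k^*\RR i_*\RR i^!\Z/N\to \RR i'_*\RR i'^!\Z/N$ is an isomorphism. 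Proper base change along the Cartesian square identifies the source with $\RR i'_*(k'^*\RR i^!\Z/N)$, reducing me to identifying $k'^*\RR i^!\Z/N$ with $\RR i'^!\Z/N$. Gabber's theorem for the regular pair $(X,D)$ gives $\RR i^!\Z/N\cong\bmu_N^{\otimes(-1)}[-2]$, and (i) gives $\RR i'^!\Z/N\cong\bmu_N^{\otimes(-1)}[-2]$; these agree because the cycle class is compatible with the transverse pullback, i.e.\ $\cl_{E,N}(D\cap E)=k'^*\cl_{X,N}(D)$. Hence $(\RR j_*\Z/N)|_E\to \RR j'_*\Z/N$ is an isomorphism.

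\emph{Assertion (iii).} Let $\pi:X\to S$ be the structure morphism, $s\in S$ the closed point with inclusion $i_s$, and $\pi_s:E\to s$ the map induced on $E=X_s$; set $\cF:=\bmu_N^{\otimes l}$. Leray for $j$ and $\pi$ gives $H^q_{\et}(X\setminus D,\cF)\cong H^q(S,\RR\pi_*\RR j_*\cF)$, and since $S$ is Henselian local this equals $H^q(s,i_s^*\RR\pi_*\RR j_*\cF)$. Proper base change turns $i_s^*\RR\pi_*$ into $\RR\pi_{s*}\,(-)|_E$, so $i_s^*\RR\pi_*\RR j_*\cF\cong\RR\pi_{s*}\bigl((\RR j_*\cF)|_E\bigr)$, and (ii), twisted by the lisse sheaf $\bmu_N^{\otimes l}$, identifies $(\RR j_*\cF)|_E$ with $\RR j'_*\bmu_N^{\otimes l}$. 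Unwinding Leray for $\pi_s$ and $j'$ recovers $H^q_{\et}(E\setminus(D\cap E),\bmu_N^{\otimes l})$. As every arrow in this chain is a natural restriction or base-change map, the composite is the restriction map of the statement, and being a composite of isomorphisms it is itself one.
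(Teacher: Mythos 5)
Your proof follows essentially the same route as the paper's: part (ii) is exactly the paper's comparison of the two localization triangles (an isomorphism on the closed terms coming from Gabber's purity, part (i), and the functoriality of the cycle class $\cl_{X,N}(D)|_E=\cl_{E,N}(D\cap E)$), part (iii) is the same proper-base-change argument, and for part (i) the paper simply observes that the proof of Saito--Sato's Lemma 3.4 (1) applies verbatim in this generality---which is the same induction on the number of components of $E$ that you sketch, and whose key compatibility you likewise defer to Rapoport--Zink and Saito--Sato. One small correction: in (iii) the divisor $E$ is the \emph{reduced} special fiber, so your identification $E=X_s$ requires the invariance of \'etale cohomology under nilpotent closed immersions, which the paper invokes explicitly alongside proper base change.
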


\begin{proof}
Assertion (i) is exactly what is shown in the proof of \cite[Lemma 3.4 (1)]{SS}. In \emph{loc.\ cit.}, the additional assumptions that $X$ is flat of finite type over a discrete va\-luation ring and that $E$ is the reduced special fiber of $X$ are not used, and $D$ and $E$ are respectively denoted by $Y$ and $Z$.

To prove (ii) and (iii), we argue as in the proof of \cite[Lemma 3.4 (2)]{SS}. In the following natural morphism of distinguished triangles in $D^+_{\et}(E)$:
\begin{equation*}
\begin{aligned}
\xymatrix
@R=0.3cm 
@C=0.45cm 
{
(i_{*}\RR i^!\Z/N)|_E\ar^{\wr}[d]\ar[r]&\Z/N\ar@{=}[d]\ar[r]& ( \RR j_{*}\Z/N)|_E\ar[d]\ar[r]&\\
i'_{*}\RR i'^!\Z/N\ar[r]&\Z/N\ar[r]&\RR j'_{*}\Z/N\ar[r]&
}
\end{aligned}
\end{equation*}
the left vertical arrow is an isomorphism since $\Gys_{i,N}$ and $\Gys_{i',N}$ are isomorphisms by Gabber's purity theorem and by (i), and since $\cl_{X,N}(D)|_E=\cl_{E,N}(D\cap E)$ by functoriality of the cycle class \cite[\S 2.1.1]{Cycle}. Assertion (ii) follows. To deduce~(iii) from (ii), tensor with $\bmu_N^{\otimes l}$, take cohomology, and apply the proper base change theorem \cite[Expos\'e XII, Corollaire 5.5 (iii)]{SGA43} and the invariance of \'etale cohomology under nilpotent closed immersions \cite[Expos\'e VIII, Corollaire~1.2]{SGA42}.
\end{proof}

\begin{rem}
  In \S\ref{level}, we will apply Proposition \ref{purity} to a scheme $X$ of characteristic~$0$. In this case, one can replace the use of Gabber's purity theorem in the proof of Proposition \ref{purity} by the earlier \cite[Expos\'e XIX, Th\'eor\`emes 3.2 et 3.4]{SGA43}.
\end{rem}

\subsection{A Bertini theorem over a local base}

Proposition \ref{Bertini} is an analogue of Jannsen and Saito's Bertini theorem over a discrete valuation ring  \cite[Theorem~1.1]{JS}, when the base has higher dimension.

\begin{prop}
\label{Bertini}
Let $S$ be a local Noetherian scheme whose closed point $s\in S$ has perfect residue field $k$. Let $\pi:X\to S$ be a projective morphism with $X$ regular, let $E\subset X$ be a simple normal crossings divisor, and let $\cL$ be a $\pi$-ample line bundle on~$X$. Then there exists $l\geq 1 $ and $\sigma\in H^0(X,\cL^{\otimes l})$ such that the zero-locus $D\subset X$ of $\sigma$ is regular, contains no irreducible component of $E$ and such that $D\cup E$ is a simple normal crossings divisor in $X$.
\end{prop}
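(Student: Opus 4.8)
The plan is to prove a Bertini-type statement over a local base by finding a hypersurface section $D$ that is transverse to the stratification induced by $E$, simultaneously at the closed fiber (where the subtle base-point and characteristic issues live) and at the generic point (which is comparatively easy). The key principle is that \emph{regularity and the normal-crossings condition can be checked fiberwise over the closed point $s$}: since $S$ is local with closed point $s$, any closed subscheme of $X$ disjoint from $X_s$ is empty, so it suffices to arrange the desired transversality conditions along the special fiber $X_s$, and then a standard openness argument will propagate them. This is exactly the philosophy that lets one reduce a statement over $S$ to a statement over the residue field $k$, to which the classical Bertini theorems apply.

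First I would set up the relevant strata. The divisor $E$ has irreducible components $E_1,\dots,E_r$, and for each subset $I\subseteq\{1,\dots,r\}$ the intersection $E_I:=\bigcap_{i\in I}E_i$ is regular (or empty) by the simple normal crossings hypothesis. I want a section $\sigma$ of some $\cL^{\otimes l}$ whose zero locus $D$ satisfies: $D$ is regular, $D$ contains no component of $E$, and for every $I$ the intersection $D\cap E_I$ is regular of the expected codimension with $D$ meeting all the $E_I$ transversally (this last condition encodes that $D\cup E$ is simple normal crossings). Each of these is a condition that can be tested on the closed fibers of the strata over $s$, i.e.\ on $E_I\cap X_s$ and on $X_s$ itself, which are finite-type schemes over the perfect residue field $k$. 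The plan is therefore to apply the classical Bertini smoothness theorem over $k$ to the finitely many smooth (after base change, or generically regular) $k$-varieties obtained as the closed fibers of $X$ and of the various $E_I$, using $\pi$-ampleness of $\cL$ to guarantee that $\cL^{\otimes l}$ becomes very ample relative to $S$ for $l\gg 0$ and hence restricts to a very ample bundle on each closed fiber. Because $k$ is perfect, regularity of a closed fiber is equivalent to smoothness, so the output of the $k$-Bertini theorem is genuinely a regular (smooth) section of each stratum's special fiber.

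The technical heart is to promote this fiberwise statement to an honest statement over $S$. For this I would argue as follows. The space of sections $H^0(X,\cL^{\otimes l})$ surjects onto $H^0(X_s,\cL^{\otimes l}|_{X_s})$ for $l\gg 0$ (by relative ampleness and the vanishing of $H^1$ of the kernel, using coherence and the theorem on formal functions / Serre vanishing in the projective setting), so a good section on the closed fiber lifts to a section $\sigma$ on $X$. Its zero locus $D$ then has the property that $D\cap X_s$ is regular, avoids the components of $E$ inside $X_s$, and cuts out the $E_I\cap X_s$ transversally. The crucial point is that all the required properties of $D$ are \emph{open} conditions on $X$ whose validity on the whole of $X$ is forced by their validity along $X_s$: the non-regular locus of $D$ is closed in $X$, its image in $S$ is closed (as $\pi$ is projective, hence proper), and that image misses $s$, hence is empty because $S$ is local. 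The same locality argument disposes of the normal-crossings and dimension conditions along each $E_I$, and rules out $D$ containing a component of $E$ (any such component would meet $X_s$).

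The main obstacle I anticipate is the bookkeeping needed to run Bertini \emph{simultaneously} on all the strata $E_I$ while controlling the base locus. Over a general field one cannot always find a hypersurface transverse to a given collection of subvarieties unless the linear system is base-point-free and sufficiently positive; here relative ampleness gives base-point-freeness of $\cL^{\otimes l}|_{X_s}$ for $l\gg0$, and one must choose a single $\sigma$ whose restriction to each $E_I\cap X_s$ is a Bertini-general member of the (very ample, hence base-point-free) restricted system. The cleanest way is to intersect the finitely many dense open Bertini loci inside the projective space $\P H^0(X_s,\cL^{\otimes l}|_{X_s})$ of sections and pick $\sigma$ in their common intersection, which is nonempty since $k$ is infinite — and if $k$ is finite one invokes the Poonen-style Bertini theorem over finite fields, or more simply the version of Jannsen--Saito's argument that already handles this case, which is precisely why the hypothesis is only that $k$ be perfect rather than infinite. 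Once the fiberwise section is chosen and lifted, the propagation over $S$ is formal, so the real work is concentrated in this simultaneous transversality step on $X_s$.
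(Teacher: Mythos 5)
Your proposal shares the paper's outer skeleton (restrict to the special fiber, run Bertini over $k$ via Jouanolou or Poonen, lift a good section using surjectivity of $H^0(X,\cL^{\otimes l})\to H^0(X_s,\cL^{\otimes l}|_{X_s})$ for $l\gg 0$, then propagate from $X_s$ to $X$ using properness of $\pi$ and locality of $S$), but the central mechanism is flawed: you propose to arrange that $D\cap X_s$ (and the $D\cap E_I\cap X_s$) be \emph{regular}, and to deduce from this that $D$ is regular near $X_s$. The problem is that this fiberwise condition is in general impossible to achieve, because the special fibers $X_s$ and $E_{I,s}$ need not be regular: $X$ is regular, but $\pi$ is an arbitrary projective morphism to a possibly very singular local Noetherian base (think of a blow-up or a resolution), and indeed in the paper's own application one has $E=X_s^{\red}$, a simple normal crossings divisor which is singular along all loci where two components cross, with $X_s$ itself possibly non-reduced. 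A Cartier divisor cut out by a nonzerodivisor through a singular point of $X_s$ is automatically singular there (the embedding dimension drops by at most $1$ while the dimension drops by exactly $1$), and by ampleness $D$ cannot avoid the singular locus of $X_s$ as soon as that locus has positive dimension. So no choice of $\tau$ makes $D\cap X_s$ regular, and your parenthetical hedge (``after base change, or generically regular'') does not supply a mechanism for establishing regularity of $D$ at the points where $X_s$ is singular --- which is exactly where the difficulty of the proposition lives. A secondary issue: your propagation step invokes closedness of the non-regular locus of $D$, which can fail for general Noetherian (non-excellent) schemes; the paper instead uses that the good locus is stable under generization together with properness of $\pi|_{D_H}$, which needs no excellence.

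The missing idea is the paper's two-step replacement for ``regular fiber section.'' First, since $k$ is perfect, one stratifies each reduced special fiber $(E_{H,s})^{\red}$ (including $H=\varnothing$, i.e.\ $X_s^{\red}$) by Noetherian induction into finitely many \emph{smooth} locally closed $k$-subvarieties $Y_{H,j}$, and chooses $\tau$ whose zero locus is smooth of codimension $1$ in every stratum --- one asks only for transversality to the strata, never for regularity of the fiber sections themselves. Second, at a point $x\in D\cap E_H\cap X_s$, lying in some stratum $Y_{H,j}$, this transversality forces $T_{D\cap E_H,x}\subsetneq T_{E_H,x}$; since $E_H$ is regular at $x$ (a consequence of $E$ being simple normal crossings in the regular scheme $X$, not of any property of the fiber) and $D\cap E_H$ is cut out in $E_H$ by a single equation, that equation is nonzero in $\mathfrak{m}_x/\mathfrak{m}_x^2$, so $D\cap E_H$ is regular of codimension $1$ at $x$. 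Regularity of $D$ itself is the case $H=\varnothing$ and comes from regularity of $X$. This cotangent-space argument is what converts fiberwise (stratum-wise) information into regularity upstairs; without it, or some substitute, the fiberwise-to-global step of your proposal does not go through.
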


\begin{proof}
If $l\gg 0$, and we choose such a $l$, then $\cL^{\otimes l}|_{X_s}$ is very ample and the restriction map $H^0(X,\cL^{\otimes l})\to H^0(X_s,\cL^{\otimes l}|_{X_s})$ is surjective by Serre vanishing. Let $(E_i)_{i\in I}$ be the irreducible components of $E$, and define $E_H:=\cap_{i\in H}E_i$ for $H\subset I$.
By Noetherian induction, we may write the $k$-variety 
$(E_{H,s})^{\red}$ as a disjoint union of finitely many smooth connected locally closed subvarieties ${(Y_{H,j}\subset X_s^{\red})_{j\in J(H)}}$, where $J(H)$ is a finite set of indices. By Bertini's theorem \cite[Th\'eor\`eme 6.10 2)]{Jouanolou} (if $k$ is finite, we rather use Poonen's \cite[Theorem 1.3]{Poonen} after maybe replacing $l$ with an appropriate multiple) applied to all the subvarieties $Y_{H,j}$ of $X_s^{\red}$ for varying $H\subset I$ and $j\in J(H)$, there exists $\tau\in H^0(X_s,\cL^{\otimes l}|_{X_s})$ such that the zero-locus of $\tau$ in $Y_{H,j}$ is smooth of codimension $1$ in $Y_{H,j}$. Let $\sigma\in H^0(X,\cL^{\otimes l})$ be such that $\sigma|_{X_s}=\tau$. Let $D\subset X$ be the zero-locus of~$\sigma$ and set $D_H:=D\cap E_H$ for $H\subset I$.

Fix $H\subset I$, and let $\Xi_H\subset D_H$ be the set of $x\in D_H$ such that $D_H$ is regular of codimension~$1$ in~$E_H$ at $x$. 
Choose $x\in D_{H,s}$, and let $j\in J(H)$ be such that $x\in Y_{H,j}$.
The inclusion $T_{D\cap Y_{H,j},x}\subset T_{Y_{H,j},x}$ is not an equality by our choice of $\tau$. It follows that the inclusion $T_{D_H,x}\subset T_{E_H,x}$ is not an equality either. Since $E_H$ is regular at $x$ and $D_H$ is defined, locally at $x\in E_H$, by the vanishing of a single equation, we deduce that $x\in \Xi$. We have shown that $D_{H,s}\subset \Xi$.
As $\Xi$ is stable by generization and $\pi|_{D_H}:D_H\to S$ is proper, we deduce that $\Xi=D_H$. This completes the proof of the proposition.
\end{proof}

\section{Sums of squares}
\label{sos}

This section is devoted to the proof of Theorem \ref{thmain}.

\subsection{Sums of squares and Galois cohomology}

If $X$ is a scheme on which $2$ is invertible, and if $a\in\cO(X)^*$, we denote by $\{a\}\in  H^1_{\et}(X,\Z/2)$ the image of $a$ by the boundary map of the Kummer exact sequence $0\to\Z/2\to\bG_m\xrightarrow{2}\bG_m\to 0$.

\begin{prop}
\label{carrecoho}
Let $F$ be a field of characteristic $\neq 2$, let $a\in F^*$, and choose $r\geq 0$. The following assertions are equivalent.
\begin{enumerate}[(i)]
\item One has $\{-1\}^{r}\cupp\{a\}=0\in H^{r+1}(F,\Z/2)$.
\item The element $a\in F^*$ is a sum of $2^{r}$ squares in $F$.
\end{enumerate}
\end{prop}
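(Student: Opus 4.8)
The plan is to reinterpret both conditions via Pfister forms and to bridge them through the top cohomological invariant. Write $\tau_r:=\langle 1,1\rangle^{\otimes r}\cong 2^r\langle 1\rangle$, the $r$-fold Pfister form whose nonzero represented values are precisely the sums of $2^r$ squares in $F$. My first step is to show that (ii) is equivalent to the hyperbolicity of the $(r+1)$-fold Pfister form
\[
\phi:=\tau_r\otimes\langle 1,-a\rangle\cong 2^r\langle 1\rangle\perp 2^r\langle -a\rangle .
\]
Since the represented values of a Pfister form form a subgroup of $F^*$ and a Pfister form is isotropic if and only if it is hyperbolic \cite{Lam}, one sees that $a$ is a sum of $2^r$ squares exactly when $\phi$ is isotropic: if $a=\tau_r(v)$ then $(v,1)$ is an isotropic vector, an isotropic vector $(v,w)$ with $w\neq 0$ yields $a=\tau_r(v)\tau_r(w)^{-1}\in D(\tau_r)$ by the group property, and $w=0$ forces $\tau_r$ itself to be isotropic, a degenerate case in which $\phi$ is automatically hyperbolic and every element of $F^*$ is a sum of $2^r$ squares.

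Next I would record that $\phi\in I^{r+1}$, where $I\subset W(F)$ is the fundamental ideal, and that the cohomological invariant $e_{r+1}\colon I^{r+1}/I^{r+2}\isoto H^{r+1}(F,\Z/2)$ carries the class of $\phi$ to $\{-1\}^{r}\cupp\{a\}$. The implication (ii)$\Rightarrow$(i) is then immediate and elementary: if $\phi$ is hyperbolic then $\phi=0$ in $W(F)$, so $\{-1\}^{r}\cupp\{a\}=e_{r+1}(\phi)=0$.

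For the converse (i)$\Rightarrow$(ii), I would assume $\{-1\}^{r}\cupp\{a\}=0$. By the injectivity of $e_{r+1}$ the class of $\phi$ in $I^{r+1}/I^{r+2}$ vanishes, i.e.\ $\phi\in I^{r+2}$; but a nonzero anisotropic form in $I^{r+2}$ has dimension at least $2^{r+2}$ by the Arason--Pfister Hauptsatz \cite{Lam}, whereas $\dim\phi=2^{r+1}$. Hence the anisotropic part of $\phi$ vanishes, $\phi$ is hyperbolic, and $a$ is a sum of $2^r$ squares.

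The one genuinely deep ingredient is the injectivity of $e_{r+1}$, that is, the Milnor conjecture on quadratic forms (Orlov--Vishik--Voevodsky); this is the step I expect to be the crux. Everything else is classical, and the only routine points left to verify are the identity $e_{r+1}(\phi)=\{-1\}^{r}\cupp\{a\}$ and the bookkeeping in the degenerate case where $\tau_r$ is already isotropic.
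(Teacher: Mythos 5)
Your proof is correct, and it takes a genuinely different route to the same pivot. Both arguments hinge on the $(r+1)$-fold Pfister form $\phi=\langle 1,1\rangle^{\otimes r}\otimes\langle 1,-a\rangle$, but the bridges to cohomology differ. The paper goes from (i) to Milnor K-theory via Voevodsky's norm residue isomorphism, then quotes Elman--Lam (the symbol $\{-1,\dots,-1,a\}$ vanishes in $K^M_{r+1}(F)/2$ if and only if $\phi$ is isotropic), and finishes with Pfister's isotropic-iff-hyperbolic theorem and the classical representation criterion to reach (ii). You instead stay inside the Witt ring: you prove (ii) is equivalent to hyperbolicity of $\phi$ directly (the group property of values of Pfister forms), and then convert the vanishing of $\{-1\}^{r}\cupp\{a\}=e_{r+1}(\phi)$ into hyperbolicity using injectivity of $e_{r+1}\colon I^{r+1}/I^{r+2}\to H^{r+1}(F,\Z/2)$ (Orlov--Vishik--Voevodsky) together with the Arason--Pfister Hauptsatz. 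So the deep input is packaged differently: the paper needs only the norm residue isomorphism in degree $r+1$ plus the classical Elman--Lam result, whereas you need the quadratic-form incarnation of the Milnor conjecture, where even the well-definedness of $e_{r+1}$ on $I^{r+1}/I^{r+2}$ is part of the deep theorem; in exchange, your direction (ii)$\Rightarrow$(i) is elementary and the only quadratic-form fact beyond that is the Hauptsatz. One small repair in your isotropy bookkeeping: if the isotropic vector $(v,w)$ has $w\neq 0$ but $\tau_r(w)=0$, you cannot divide by $\tau_r(w)$; this case also forces $\tau_r$ to be isotropic and so belongs to your degenerate case, which already handles it.
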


\begin{proof}
By the Milnor conjecture proven by Voevodsky \cite[Corollary 7.4]{Voevodsky}, statement~(i) is equivalent to the vanishing of the symbol $\{-1,\dots,-1,a\}\in K^M_{r+1}(F)/2$ in Milnor K-theory. By \cite[Corollary 3.3]{EL}, it is in turn equivalent to the Pfister form $\langle 1,1\rangle^{\otimes r}\otimes \langle 1,-a\rangle$ being isotropic. Since a Pfister form is isotropic if and only if it is hyperbolic \cite[Theoreme 1 und 2]{Pfistermult}, this is also equivalent to the isotropy of $\langle 1\rangle^{\oplus 2^{r}}\oplus\langle -a\rangle$, hence to condition (ii) by \cite[I, Corollary 3.5]{Lam}.
\end{proof}

\subsection{Level}
\label{level}

In \S\ref{level}, we study the level of function fields over Henselian local fields.

\begin{prop}
\label{proplevel}
Let $S$ be an integral Henselian excellent local scheme of dimension $\geq 1$ with closed point $s\in S$ whose residue field $k$ has characteristic $0$. Let $\pi:X\to S$ be a proper surjective morphism with $X$ regular, integral of dimension~$d$, and let $F$ be the function field of $X$.
\begin{enumerate}[(i)]
\item If $(X_s)_r\neq\varnothing$, then $s(F)=+\infty$.
\item  If $(X_s)_r=\varnothing$ and $\cd_2(k[\sqrt{-1}])\leq \delta$, then $s(F)\leq 2^{d+\delta-1}$. 
\end{enumerate}
\end{prop}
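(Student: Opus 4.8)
The plan is to treat the two assertions by different methods: (i) is a direct formal-reality argument via the Cohen structure theorem, while (ii) combines Proposition \ref{carrecoho} with the vanishing machinery of \S\ref{preliminaries} and an affine Lefschetz-type vanishing theorem of Scheiderer. For (i), suppose $(X_s)_r\neq\varnothing$, so there are $x\in X_s$ and an ordering $\prec$ on $\kappa(x)$. Since $k$ has characteristic $0$ and $k\subseteq\kappa(x)$, the regular local ring $\cO_{X,x}$ is equicharacteristic $0$, so by Cohen its completion is $\kappa(x)[[t_1,\dots,t_h]]$, and $\cO_{X,x}\hookrightarrow\widehat{\cO_{X,x}}$ is injective as $\cO_{X,x}$ is a Noetherian local domain. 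Passing to fraction fields gives $F=\Frac(\cO_{X,x})\hookrightarrow\kappa(x)((t_1))\cdots((t_h))$, an iterated Laurent series field which is formally real (extend $\prec$ by taking signs of leading coefficients with respect to the $t_i$-adic valuations). As a subfield of a formally real field, $F$ is formally real, so $s(F)=+\infty$.

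For (ii), I work with $N=2$ throughout (legitimate in characteristic $0$). By Proposition \ref{carrecoho} with $a=-1$ and $r=d+\delta-1$, it suffices to show $\{-1\}^{d+\delta}=0$ in $H^{d+\delta}(F,\Z/2)$, as this makes $-1$ a sum of $2^{d+\delta-1}$ squares. Since $-1\in\cO(X)^*$, the class $\{-1\}^{d+\delta}$ is defined in $H^{d+\delta}_{\et}(U,\Z/2)$ for every open $U\subseteq X$, and I must exhibit a dense open on which it vanishes. First I normalize the model. The data $F$, $d=\dim X$, and the emptiness of $(X_s)_r$ are preserved by any projective birational $S$-morphism $X'\to X$ with $X'$ regular integral: $F$ and $d$ are birational invariants, and a real point of $X'_s$ maps to one of $X_s$ because $X'\to X$ is a morphism over $S$, so $(X_s)_r=\varnothing$ forces $(X'_s)_r=\varnothing$. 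By Chow's lemma and resolution of singularities (Hironaka, available since $S$ is excellent with residue characteristic $0$), I may assume $X$ is projective over $S$; by principalizing the ideal sheaf $\mathfrak m_s\cO_X$ of the special fiber, I may further assume $E:=(X_s)^{\red}$ is a simple normal crossings divisor. It is then projective of dimension $\leq d-1$ over $k$ with $E_r=(X_s)_r=\varnothing$.

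Next I cut down and transfer to the special fiber. Choosing a $\pi$-ample $\cL$, Proposition \ref{Bertini} yields $l\geq1$ and $\sigma\in H^0(X,\cL^{\otimes l})$ with regular zero-locus $D$ sharing no component with $E$ and such that $D\cup E$ is simple normal crossings. Put $U:=X\setminus D$ and $E':=E\setminus(D\cap E)=E\cap U$. As $\cL^{\otimes l}|_{X_s}$ is ample and $D\cap X_s$ is the zero-locus of $\sigma|_{X_s}$, the variety $E'$ is affine of dimension $\leq d-1$ over $k$ with $E'_r=\varnothing$. Proposition \ref{purity}(iii) shows that the restriction $H^{d+\delta}_{\et}(U,\Z/2)\to H^{d+\delta}_{\et}(E',\Z/2)$ is an isomorphism; being induced by $E'\hookrightarrow U$ it is compatible with cup products and carries $\{-1\}^{d+\delta}|_U$ to $\{-1\}^{d+\delta}|_{E'}$. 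Finally I invoke Scheiderer's affine vanishing theorem: for an affine $k$-variety $V$ with $V_r=\varnothing$ one has $H^q_{\et}(V,\Z/2)=0$ for $q>\dim V+\cd_2(k[\sqrt{-1}])$. With $V=E'$, $\dim V\leq d-1$ and $\cd_2(k[\sqrt{-1}])\leq\delta$, this applies to $q=d+\delta>(d-1)+\delta$, so $H^{d+\delta}_{\et}(E',\Z/2)=0$. Through the isomorphism, $\{-1\}^{d+\delta}|_U=0$, hence $\{-1\}^{d+\delta}=0$ in $H^{d+\delta}(F,\Z/2)$ and $s(F)\leq2^{d+\delta-1}$.

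I expect the main obstacle to be the geometric normalization: arranging simultaneously that $X$ is projective over $S$ and that the reduced special fiber is a simple normal crossings divisor, while keeping $X$ regular and not destroying $(X_s)_r=\varnothing$. The monotonicity of the real-points condition under $S$-morphisms is precisely what renders these resolution steps harmless. The other essential input is the exact numerical bound $\dim V+\cd_2(k[\sqrt{-1}])$ in the affine vanishing theorem, which yields the sharp exponent: the gain of one unit producing $2^{d+\delta-1}$ rather than $2^{d+\delta}$ comes exactly from $\dim E'\leq d-1<d$.
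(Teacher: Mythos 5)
Your proposal is correct and follows essentially the same route as the paper: your part (ii) is the paper's argument step for step (Chow's lemma plus resolution to make $E=X_s^{\red}$ a simple normal crossings divisor, Proposition \ref{Bertini} to produce $D$, Proposition \ref{purity} (iii) to transfer to the affine variety $E\setminus(D\cap E)$, the Artin/Scheiderer vanishing there, and Proposition \ref{carrecoho} with $a=-1$). For part (i) the paper proves a slightly stronger statement (Lemma \ref{gener}, a density assertion in the real spectrum that is reused later, obtained via a flag of regular local rings and completed discrete valuation rings rather than a single application of Cohen's structure theorem), but the underlying mechanism --- embedding $F$ into $\kappa(x)((t_1))\cdots((t_h))$ and extending the ordering by signs of leading coefficients --- is the same as yours, so your streamlined version suffices for (i).
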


\begin{proof}
If $(X_s)_r\neq\varnothing$, then $\Spec(F)_r\neq\varnothing$ by Lemma \ref{gener} below, proving assertion~(i).

To prove (ii), we may assume that $\pi$ is projective and that $E:=X_s^{\red}$ is a simple normal crossings divisor in $X$, by Chow's lemma \cite[Th\'eor\`eme 5.6.1]{EGA2} and resolution of singularities \cite{Hironaka, Temkin}.
By Proposition \ref{Bertini}, there exists a regular divisor $D\subset X$ containing no irreducible component of $E$, such that $D\cup E$ is a simple normal crossings divisor in $X$ and such that $X\setminus D$ is affine.

Since the $k$-variety $U:=(X_s\setminus D_s)^{\red}$ is affine of dimension $d-1$, one has $\cd_2(U_{k[\sqrt{-1}]})\leq d+\delta-1$ by \cite[Expos\'e XIV, Corollaire 3.2]{SGA43} and by the hypo\-thesis that 
$\cd_2(k[\sqrt{-1}])\leq \delta$.
Since moreover ${U_r=\varnothing}$, Scheiderer \cite[Corollary~7.21]{Scheiderer} has shown that $\cd_2(U)\leq d+\delta-1$, hence that $H^{d+\delta}_{\et}(U,\Z/2)=0$.
Proposition \ref{purity} (iii)  yields an isomorphism $H^{d+\delta}_{\et}(X\setminus D,\Z/2)\simeq H^{d+\delta}_{\et}(U,\Z/2)=0$.

One has $\{-1\}^{d+\delta}=0\in H^{d+\delta}_{\et}(X\setminus D,\Z/2)$ since the whole group vanishes. As a consequence, $\{-1\}^{d+\delta}=0\in H^{d+\delta}(F,\Z/2)$.  Applying Proposition \ref{carrecoho} with $a=-1$ yields $s(F)\leq 2^{d+\delta-1}$, proving (ii).
\end{proof}

\begin{lem}
\label{gener}
Let $X$ be an integral regular scheme with function field $F$. Then any point of $X_r$ is in the closure of some point of $\Spec(F)_r\subset X_r$.
\end{lem}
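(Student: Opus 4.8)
The plan is to reduce the statement to a local question about orderings of fields and then to lift an ordering from a residue field up to the function field along a valuation. Let $(x,\prec)\in X_r$, so that $\kappa(x)$ carries the ordering $\prec$; in particular $\kappa(x)$ is formally real, hence of characteristic $0$, and since $X$ is integral the same holds for $F$. Choosing an affine open $\Spec(A)\ni x$, regularity and integrality of $X$ make $A$ a regular domain with $\Frac(A)=F$, and $X_r$ restricts over this open to $\mathrm{Sper}(A)$. Because closure in an open subspace agrees with closure computed in the ambient space, it suffices to produce an ordering $P$ of $F$, viewed as the point of $\mathrm{Sper}(A)$ lying over the generic point $\eta$ (support $(0)$), such that $(x,\prec)\in\overline{\{(\eta,P)\}}$.

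First I would record what membership in that closure means. The spectral topology on $\mathrm{Sper}(A)$ is generated by the subbasic opens $U_f=\{\gamma:f(\gamma)>0\}$, and for such a subbasis a single separating open suffices, so $(x,\prec)\in\overline{\{(\eta,P)\}}$ if and only if, for every $f\in A$, positivity of $f$ at $(x,\prec)$ forces positivity of $f$ at $(\eta,P)$. Writing $R:=\mathcal{O}_{X,x}$ for the regular local ring at $x$, with maximal ideal $\mathfrak{m}$ and residue field $\kappa(x)$, and unwinding, the condition becomes: every unit $f\in R^{*}$ whose residue $\bar f$ is $\prec$-positive must satisfy $f>_{P}0$. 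Thus the whole problem is to build an ordering $P$ of $F$ in which all such units are positive.

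I would construct $P$ by induction on $\dim R$ (the codimension of $x$), descending one dimension at a time through a discrete valuation ring. If $\dim R=0$ then $R=F=\kappa(x)$ and $P=\prec$ works. If $\dim R=h\geq 1$, choose a regular parameter $t\in\mathfrak{m}\setminus\mathfrak{m}^{2}$; then $(t)$ is a height-one prime, $R/(t)$ is again regular local of dimension $h-1$ with residue field $\kappa(x)$, and $R_{(t)}$ is a discrete valuation ring with fraction field $F$ and residue field $F_{1}:=\Frac(R/(t))$. Applying the statement inductively to the integral regular scheme $\Spec(R/(t))$ and the ordered point $(x,\prec)$ yields an ordering $\prec_{1}$ of $F_{1}$ with $(x,\prec)\in\overline{\{(\,(t),\prec_{1})\}}$ inside $\mathrm{Sper}(R/(t))$; since $\Spec(R/(t))\hookrightarrow\Spec(R)$ is a closed immersion, $\mathrm{Sper}(R/(t))$ is a closed subspace of $\mathrm{Sper}(R)$ and this specialization persists there. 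It then remains to lift $\prec_{1}$ from the residue field $F_{1}$ of $R_{(t)}$ to an ordering $P$ of $F$ for which $R_{(t)}$ is convex and which induces $\prec_{1}$ on $F_{1}$. For such a $P$, every unit of $R_{(t)}$ with $\prec_{1}$-positive residue is $P$-positive, so the subbasic criterion gives $(\,(t),\prec_{1})\in\overline{\{(\eta,P)\}}$; transitivity of specialization then yields $(x,\prec)\in\overline{\{(\eta,P)\}}$, as required.

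The hard part is the lifting step across the discrete valuation ring, together with arranging that at each stage one only ever extends an ordering to the \emph{exact} residue field of the next valuation, never to a strictly larger (and possibly non-real) field — this is precisely why I peel off one parameter at a time instead of jumping directly to a valuation with residue field $\kappa(x)$. The existence of the compatible lift is classical: since $R_{(t)}$ has residue field exactly $F_{1}$, the preordering of $F$ generated by the squares and by the units with $\prec_{1}$-positive residue is proper, so by the Baer–Krull theorem it extends to an ordering $P$ of $F$ with the desired compatibility. This valuation-theoretic input, together with the elementary topology of the real spectrum (the subbasic closure criterion and the closed/open subspace comparisons), is the only non-formal ingredient; regularity of $X$ enters exactly to guarantee the regular parameter $t$ with $R/(t)$ regular and $R_{(t)}$ a discrete valuation ring.
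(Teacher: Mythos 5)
Your proof is correct, and it takes a genuinely different route from the paper's at the key step. Both arguments exploit regularity in exactly the same way: one peels off a single regular parameter at a time so that an ordering only ever has to be lifted across a discrete valuation ring whose residue field is precisely the next function field (the paper phrases this as a chain of surjections of regular local rings $\mathcal{O}_{X,x}=A_N\to\dots\to A_0=\kappa(x)$ whose successive kernels localize to discrete valuation rings). The difference lies in how the lift is performed. The paper completes each of these discrete valuation rings, invokes Cohen's structure theorem (Serre, \emph{Corps locaux}, II \S 4, Th\'eor\`eme 2, which is where the characteristic-$0$ observation is actually needed) to embed $F$ into the iterated Laurent series field $\kappa(x)((t_1))\dots((t_N))$, and then uses Lam's explicit description of orderings on Laurent series fields (VIII, Proposition 4.11(1)) to produce the ordering and read off the specialization in one shot from the definition of the topology on $X_r$. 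You instead stay purely valuation-theoretic: you lift the ordering across each $R_{(t)}$ by Baer--Krull --- and your parenthetical justification is exactly the right one, namely that the preordering of $F$ generated by the squares and the units of $R_{(t)}$ with $\prec_1$-positive residue is proper, hence extends to an ordering --- then verify specialization at each stage via the subbasic-open criterion and conclude by transitivity of specialization. Your route avoids completion and the structure theory of complete discrete valuation rings altogether, at the price of invoking the Baer--Krull/Artin--Schreier extension machinery; the paper's route outsources more to the literature and handles the whole chain at once. Two minor remarks: what you actually use is only the existence half of Baer--Krull (equivalently, Artin--Schreier extension of your proper preordering), not the full parametrization of compatible orderings; and for the specialization to persist from $\mathrm{Sper}(R/(t))$ to $\mathrm{Sper}(R)$ it is enough that the former is a subspace of the latter --- closedness, though true, is not needed, since the closure of a point in a subspace is always contained in its closure in the ambient space.
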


\begin{proof}
Let $(x,\prec)\in X_r$, where $x\in X$ and $\prec$ is a field ordering of $\kappa(x)$.
Since $\kappa(x)$ is formally real, it has characteristic $0$.  As $\cO_{X,x}$ is regular, we can find a sequence 
$\cO_{X,x}=A_N\to\dots\to A_0=\kappa(x)$ of surjections of regular local rings such that the localization of $A_i$ at the kernel of $A_i\to A_{i-1}$ is a discrete valuation ring. Applying \cite[II \S 4, Th\'eor\`eme 2]{Corpslocaux} to the completions of these discrete valuation rings yields an inclusion $F\subset\kappa(x)((t_1))\dots((t_N)).$
By \cite[VIII, Proposition 4.11 (1)]{Lam}, the ordering $\prec$ of $\kappa(x)$ may be extended to an ordering $\prec'$ of $\kappa(x)((t_1))\dots((t_N))$. The description of $\prec'$ given in \emph{loc.\ cit.} shows that if the constant coefficient of $f\in \kappa(x)[[t_1,\dots,t_N]]$ is $\succ 0$, then $f\succ' 0$. Let $\prec_F$ be the restriction of $\prec'$ to $F$. The definition \cite[(0.4)]{Scheiderer} of the topology of $X_r$ shows that $(x,\prec)$ belongs to the closure of $(\Spec(F),\prec_F)$ in $X_r$, proving the lemma.
\end{proof}

The first assertion of Theorem \ref{thmain} follows easily from Proposition \ref{proplevel}.

\begin{proof}[Proof of Theorem \ref{thmain} (i)]
We may assume that $F$ is finitely generated over $K$. Define $S:=\Spec(A)$, and let $\pi:X\to S$ be a projective morphism with $X$ integral such that $F$ is the function field of $X$. Resolving singularities \cite{Hironaka,Temkin}, we may assume that $X$ is regular. It has dimension $d:=n+m$. Since $F$ is not formally real, Proposition \ref{proplevel}
shows that $s(F)\leq 2^{d+\delta-1}$. 
As $p(F)\leq s(F)+1$ for any field $F$ that is not formally real \cite[XI, Theorem 5.6~(2)]{Lam}, we deduce that $p(F)\leq 2^{d+\delta-1}+1$.
\end{proof}

\subsection{Pythagoras number}
\label{pythagoras}

We now deduce the two last assertions of Theorem \ref{thmain} from the first.

\begin{proof}[Proof of Theorem \ref{thmain} (ii)]
Let $a\in F^*$ be a sum of squares. Since $F$ is formally real, $-a$ is not a square in $F$. We consider the field extension $L:=F[\sqrt{-a}]$ of $F$. One has $s(L)\leq 2^{n+m+\delta-1}$ by Theorem \ref{thmain} (i) because $L$ is not formally real. That $a$ is a sum of $2^{n+m+\delta}-1$ squares in $F$ follows from \cite[Chapter 11, Theorem~2.7]{vieuxLam}.
\end{proof}

\begin{proof}[Proof of Theorem \ref{thmain} (iii)]
Let $a\in K^*$ be a sum of squares, and consider the class $\alpha:=\{-1\}^{n+\delta-1}\cupp\{a\}\in H^{n+\delta}(K,\Z/2)$.
If $D\subset S:=\Spec(A)$ is an integral divisor with generic point $\eta_D$, we let $\res_D(\alpha)\in H^{n+\delta-1}(\kappa(\eta_D),\Z/2)$ be the residue of~$\alpha$ along $D$ \cite[\S 3.3]{CTBarbara}. 
It follows from \cite[Proposition 1.3]{CTOj} that $\res_D(\alpha)=e\{-1\}^{n+\delta-1}$, where $e\in\Z$ is the order of vanishing of $a$ along $D$.

Completing $A$ at $\eta_D$ yields an embedding $K\subset \kappa(\eta_D)((t))$. Since $a$ is a sum of squares in $K$ hence also in $\kappa(\eta_D)((t))$, either the $t$-adic valuation of $a\in \kappa(\eta_D)((t))$ is even, or $\kappa(\eta_D)$ is not formally real, by \cite[Proposition~4.2]{BGvG}. In the first case, $e$ is even and $\res_D(\alpha)=0$. In the second case, one has $n\geq 2$ since $k$ is formally real. It is thus possible to apply Theorem \ref{thmain}~(i) to the coordinate ring $\cO(D)$ of~$D$. This shows that $s(\kappa(\eta_D))\leq 2^{n+\delta-2}$, hence that $\{-1\}^{n+\delta-1}=0\in H^{n+\delta-1}(\kappa(\eta_D),\Z/2)$ by Proposition \ref{carrecoho}. Consequently, $\res_D(\alpha)=0$.

We have shown that the residues of $\alpha$ along all integral divisors $D\subset S$ vanish. Since $A$ is regular, applying the Gersten conjecture proven in this context by Panin \cite[Theorem C]{Panin} shows that $\alpha$ lifts to a class $\beta\in H^{n+\delta}_{\et}(S,\Z/2)$. 
Let $R$ be a real closed extension of $K$. Since $a$ is a sum of squares in $K$, it is a square in $R$, and it follows that $\beta|_R=\alpha|_R=0\in H^{n+\delta}(R,\Z/2)$.
By Lemma \ref{injec} below, one has $\beta=0$, hence $\alpha=0$, and Proposition \ref{carrecoho} implies that $a$ is a sum of $2^{n+\delta-1}$ squares~in~$K$.
\end{proof}

We have used the following lemma.

\begin{lem}
\label{injec}
Let $S$ be the spectrum of an integral Henselian regular local ring with residue field $k$ and fraction field $K$, and let $\beta\in H^q_{\et}(S, \Z/2)$. If $q>\cd_2(k[\sqrt{-1}])$ and if $\beta|_R=0\in H^q(R,\Z/2)$ for all real closed extensions $R$ of $K$, then $\beta=0$.
\end{lem}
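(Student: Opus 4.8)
The plan is to reduce the statement to a question about the residue field $k$ by Henselian invariance, to apply Scheiderer's real--étale comparison over $k$, and then to transport the vanishing hypothesis from the orderings of $K$ to the orderings of $k$ by means of the specialization of orderings supplied by Lemma \ref{gener}.

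First I would use the rigidity of étale cohomology for the Henselian pair $(A,\mathfrak m)$: since $2$ is invertible on $S$, restriction to the closed point $s=\Spec k$ induces an isomorphism $i^{*}\colon H^{q}_{\et}(S,\Z/2)\isoto H^{q}(k,\Z/2)$, the affine analogue of proper base change, which is a theorem of Gabber (see \cite{Fujiwara}). Write $\bar\beta:=i^{*}\beta$; it suffices to prove $\bar\beta=0$. Because $q>\cd_2(k[\sqrt{-1}])$, Scheiderer's comparison of étale and real cohomology \cite{Scheiderer} shows that the total restriction map $H^{q}(k,\Z/2)\to\prod_{\prec}H^{q}(R_{\prec},\Z/2)$, the product ranging over the orderings $\prec$ of $k$ with real closures $R_{\prec}$, is injective. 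Hence it is enough to show that $\bar\beta|_{R_{\prec}}=0$ for every ordering $\prec$ of $k$.

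So fix an ordering $\prec$ of $k$. Applying the construction in the proof of Lemma \ref{gener} to $X=S$ and the point $(s,\prec)\in S_r$ produces an ordering $\prec'$ of $K$ with real closure $R'$, together with an embedding $K\hookrightarrow L:=k((t_1))\dots((t_N))$ along which $\prec'$ extends to an ordering $\prec''$ of $L$ such that each $t_i$ is $\prec''$-positive and the residue ordering induced on $k$ is $\prec$; by hypothesis $\beta|_{R'}=0$. Let $\tilde R$ be the real closure of $(L,\prec'')$. Since $\prec''|_K=\prec'$, the field $R'$ is the real closure of $K$ inside $\tilde R$, so $\Spec\tilde R\to\Spec K\hookrightarrow S$ factors through $R'$ and $\beta|_{\tilde R}=(\beta|_{R'})|_{\tilde R}=0$. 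The remaining task is to identify $\beta|_{\tilde R}$, computed through the generic point, with $\bar\beta|_{R_{\prec}}$.

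For this I would track $\beta|_{K}$ through the flag of complete discrete valuation rings underlying $K\hookrightarrow L$. At each stage the class is unramified, since it extends across the relevant Henselian discrete valuation ring obtained from $A$; using the decomposition $H^{q}(E((t)),\Z/2)\cong H^{q}(E,\Z/2)\oplus\{t\}\cupp H^{q-1}(E,\Z/2)$ for a complete discretely valued field \cite{CohoGalois}, unramifiedness forces the $\{t_i\}$-components to vanish and the successive constant terms to compose to $\bar\beta$; thus the image of $\beta|_{K}$ in $H^{q}(L,\Z/2)$ is the pullback of $\bar\beta$ along $k\hookrightarrow L$, with no $\{t_i\}$-contribution. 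Restricting to $\tilde R$, every $\{t_i\}$ dies, a $\prec''$-positive element being a square in the real closed field $\tilde R$, and $k\hookrightarrow\tilde R$ factors through $R_{\prec}$; hence $\beta|_{\tilde R}$ is the image of $\bar\beta|_{R_{\prec}}$ under the isomorphism $H^{q}(R_{\prec},\Z/2)\isoto H^{q}(\tilde R,\Z/2)$ of groups both equal to $\Z/2$. Combined with $\beta|_{\tilde R}=0$ this gives $\bar\beta|_{R_{\prec}}=0$, whence $\bar\beta=0$ and $\beta=0$. I expect the genuine difficulty to lie in this last paragraph: propagating the unramifiedness of $\beta$ through the higher-dimensional, merely Henselian (not complete) flag so as to pin down its image in $H^{q}(L,\Z/2)$ as the constant class $\bar\beta$, and matching the iterated specialization and residue-field isomorphisms compatibly. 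The other ingredients—Henselian rigidity, Scheiderer's injectivity, and the squareness of positive elements in a real closed field—are either citations or elementary.
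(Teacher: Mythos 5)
Your proposal is correct, but the mechanism by which you transfer the vanishing hypothesis from the orderings of $K$ to the orderings of $k$ is genuinely different from the paper's. Both arguments share the same skeleton: Henselian rigidity $H^q_{\et}(S,\Z/2)\simeq H^q(k,\Z/2)$ (the paper cites proper base change, SGA4 XII Cor.\ 5.5 (iii), applied to the identity of $S$; your appeal to Gabber--Fujiwara is correct but stronger than needed), and the Arason--Elman--Jacob/Scheiderer theorem that for $q>\cd_2(k[\sqrt{-1}])$ the total signature map $H^q(k,\Z/2)\to\prod_{\prec}H^q(R_{\prec},\Z/2)$ is injective. Where the paper then invokes Scheiderer's scheme-level real--\'etale comparison on $S$ itself (the image of $\beta$ in $H^0(S_r,\Z/2)$ is a continuous function, vanishes on $\Spec(K)_r$ by hypothesis, and $\Spec(K)_r$ is dense in $S_r$ by the \emph{statement} of Lemma \ref{gener}), you argue ordering-by-ordering on $k$, rerunning the construction inside the \emph{proof} of Lemma \ref{gener}: embed $K\subset L=k((t_1))\dots((t_N))$ and identify the image of $\beta|_K$ in $H^q(L,\Z/2)$ with the constant class $\bar\beta$. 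The step you flag as the difficulty does go through, and in fact more cleanly than via residues: at each stage of the flag, $\beta$ restricts to a class over the discrete valuation ring $B_i$ obtained by localizing $A_i$ at the kernel of $A_i\to A_{i-1}$, hence over its completion $\widehat{B_i}\simeq \Frac(A_{i-1})[[t_i]]$; rigidity for the complete local ring $\widehat{B_i}$ together with the Cohen coefficient field shows that every class over $\widehat{B_i}$ is pulled back from its residue field along that coefficient field, so the restriction to $\Frac(\widehat{B_i})$ is the constant class attached to $\beta|_{\Frac(A_{i-1})}$, and induction on the flag gives exactly your claim. Your route thus trades the paper's soft topological density argument, which needs Scheiderer's machinery for the real spectrum of the whole scheme $S$, for a hands-on valuation-theoretic one that only needs the classical field-level theorem over $k$; both are legitimate, and the remaining identifications (real closure of $K$ inside $\tilde R$, injectivity of $H^q(R_{\prec},\Z/2)\to H^q(\tilde R,\Z/2)$ between real closed fields) are as routine as you say.

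One small repair: your opening assertion that ``$2$ is invertible on $S$'' need not hold, since the lemma allows $k$ of characteristic $2$ (Scheiderer's theory also requires this invertibility, so your step 2 would not apply there). As in the paper, that case is trivial: then $k=k[\sqrt{-1}]$, so $H^q(k,\Z/2)=0$ for $q>\cd_2(k)$, and rigidity (which needs no invertibility hypothesis) finishes. Note also that when $k$ is formally real but some orderings are involved your argument needs $\mathrm{char}(k)=0$ to apply Cohen's structure theorem; this is automatic, since a field admitting an ordering has characteristic $0$, and when $k$ has no orderings at all your step 2 correctly degenerates to $H^q(k,\Z/2)=0$ with no further work.
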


\begin{proof}
The case where $k$ has characteristic $2$ is trivial since $k=k[\sqrt{-1}]$ and the restriction map $H^q_{\et}(S, \Z/2)\to H^q(k, \Z/2)$ is an isomorphism by proper base change \cite[Expos\'e XII, Corollaire 5.5 (iii)]{SGA43}. Assume now that the characteristic of $k$ is~$\neq 2$.

We set $k_r:=\Spec(k)_r$ and $G:=\Z/2$, and we consider the commutative diagram
\begin{equation}
\label{cdb}
\begin{aligned}
\xymatrix
@R=0.3cm 
@C=0.45cm 
{
H^q_{\et}(S, \Z/2)\ar^{\wr}[d]\ar[r]&H^q_G(S_r,\Z/2)\ar[d]\ar@{=}[r]& \bigoplus_{i=0}^q H^i(S_r,\Z/2)\ar[d]\ar[r]&H^0(S_r,\Z/2)\ar[d]\\
H^q(k, \Z/2)\ar^(.47){\sim}[r]&H^q_G(k_r,\Z/2)\ar@{=}[r]& \bigoplus_{i=0}^q H^i(k_r,\Z/2)\ar^(.57){\sim}[r]&H^0(k_r,\Z/2)
}
\end{aligned}
\end{equation}
whose vertical maps are restriction maps, whose right horizontal arrows are the projections, and whose other arrows are the one appearing in \cite[(7.19.1)]{Scheiderer}.  More precisely, the left horizontal arrows of (\ref{cdb}) are the maps \cite[(6.6.3)]{Scheiderer} applied with $A=\Z/2$, taking into account \cite[Corollary 6.6.1]{Scheiderer} and using the 
fact that the topoi associated to $X_r$ and  to the real \'etale site $X_{\ret}$ of $X$ are naturally equivalent \cite[Theorem~1.3]{Scheiderer}, and the middle horizontal equalities of (\ref{cdb}) are obtained by taking $C=X_r$ and $k=\Z/2$ in \cite[Corollary 6.3.2]{Scheiderer}.

As explained in \cite[(7.19.1)]{Scheiderer}, if $\xi\in S_r$ corresponds to a point $x\in S$ and to an ordering $\prec$ of $\kappa(x)$, and if $R$ is the associated real closure of $\kappa(x)$, then the image of $\beta$ by the first line of (\ref{cdb}) has value $0$ at $\xi$ if and only if $\beta|_R=0\in H^q(R,\Z/2)$. This is the case for all $\xi\in \Spec(K)_r\subset S_r$ by hypothesis. Since $\Spec(K)_r$ is dense in $S_r$ by Lemma \ref{gener} and by regularity of $S$, we deduce that $\beta$ vanishes in the upper right corner of (\ref{cdb}), hence in the lower right corner of (\ref{cdb}).

On the other hand, the left vertical arrow of (\ref{cdb}) is an isomorphism by proper base change \cite[Expos\'e XII, Corollaire 5.5 (iii)]{SGA43}, and the lower left horizontal arrow of (\ref{cdb}) is an isomorphism by \cite[Corollary 7.10]{Scheiderer} applied with $A=\Z/2$ and by the hypothesis that $q>\cd_2(k[\sqrt{-1}])$.
Moreover, since $k_r$ is Hausdorff, compact and totally disconnected \cite[VIII, Theorem~6.3]{Lam}, the global sections functor for abelian sheaves on $k_r$ is exact, showing that $H^i(k_r,\Z/2)=0$ for $i>0$, hence that the lower right horizontal arrow of (\ref{cdb}) is also an isomorphism. The commutativity of (\ref{cdb}) now shows that $\beta=0$.
\end{proof}

\begin{rem}
The bottom line of diagram (\ref{cdb}) goes back to the work of Arason, Elman and Jacob \cite{AEJ} (see especially  Theorem 2.3, Proposition 2.4 and the proof of Corollary 2.8 in \emph{loc.\ cit.}). Scheiderer's book \cite{Scheiderer} contains far-reaching generalizations of these results.
\end{rem}

\subsection{Optimality}
\label{optimality}

We now show the optimality of Theorem \ref{consmain}, conditionally upon Pfister's inequalities $p(\R(x_1,\dots,x_n))\leq 2^n$ being equalities.

\begin{prop}
\label{propopt}
Assume that $p(\R(x_1,\dots,x_{n-1}))=2^{n-1}$ for some $n\geq 1$. Then: 
\begin{enumerate}[(i)]
\item One has $p(\R((x_1,\dots,x_{n})))=2^{n-1}$.
\item There exists a finite extension $F$ of $\R((x_1,\dots,x_{n}))$ such that $s(F)=2^{n-1}$ and $p(F)=2^{n-1}+1$.
\end{enumerate}
\end{prop}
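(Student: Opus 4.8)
The plan is to derive both statements from the upper bounds already proved in the paper (Theorem \ref{consmain}, equivalently Theorem \ref{thmain}) together with matching lower bounds obtained by specialization along a carefully chosen discrete valuation of $K:=\R((x_1,\dots,x_n))$. The essential choice is the $\mathfrak{m}$-adic \emph{order} valuation $v$ on $A=\R[[x_1,\dots,x_n]]$, i.e.\ the divisorial valuation attached to the exceptional divisor of the blow-up of the closed point, and \emph{not} the $x_n$-adic one. Since $\mathrm{gr}_{\mathfrak{m}}A$ is a polynomial ring, the residue field of $v$ is the function field $\R(\P^{n-1}_\R)\cong\R(x_1,\dots,x_{n-1})$, which is formally real and, by hypothesis, has Pythagoras number exactly $2^{n-1}$; moreover $x_n$ is a uniformizer. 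I write $\overline{K}$ for this residue field. The whole argument rests on transporting the extremal square-length datum of $\overline{K}$ back up to $K$.

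For (i), the bound $p(K)\le 2^{n-1}$ is Theorem \ref{consmain}~(i). For the reverse inequality I pick $\overline{f}\in\overline{K}$ that is a sum of $2^{n-1}$ squares but not of $2^{n-1}-1$, write $\overline{f}=\sum_k\overline{h}_k^2$, lift each $\overline{h}_k$ to $h_k\in\mathcal{O}_v$, and set $f:=\sum_k h_k^2$, so that $f$ is a sum of $2^{n-1}$ squares in $K$ with $v(f)=0$ and residue $\overline{f}$. The key point is a no-cancellation argument: if $f=\sum_{i=1}^r g_i^2$ in $K$ and $\mu:=\min_i v(g_i)$, then the leading coefficient of the sum equals $\sum_{v(g_i)=\mu}\overline{(g_i x_n^{-\mu})}^2$, which cannot vanish in the formally real field $\overline{K}$; hence $v(f)=2\mu=0$, all $g_i$ lie in $\mathcal{O}_v$, and reduction yields $\overline{f}=\sum_i\overline{g}_i^{\,2}$, a sum of $\le r$ squares. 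Therefore $r\ge 2^{n-1}$, which gives $p(K)\ge 2^{n-1}$ and proves (i). (Note the purely cohomological criterion of Proposition \ref{carrecoho} only detects square-lengths at powers of $2$, so it would give the weaker $p(K)>2^{n-2}$; the direct counting is what produces the sharp bound.)

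For (ii), set $F:=K(\sqrt{-f})$ with $f$ as above; since $f$ is totally positive, $[F:K]=2$. As $f$ is a sum of $2^{n-1}$ squares, $\{-1\}^{n-1}\cupp\{f\}=0$ in $H^n(K,\Z/2)$, and $\{f\}=\{-1\}$ in $H^1(F,\Z/2)$, so $\{-1\}^n=0$ in $H^n(F,\Z/2)$; thus $F$ is not formally real, $s(F)\le 2^{n-1}$, and $p(F)\le 2^{n-1}+1$ by Theorem \ref{thmain}~(i). The valuation $v$ extends to a discrete valuation $v_F$ on $F$ of ramification index $1$, with residue field $\overline{F}=\overline{K}(\sqrt{-\overline{f}})$ and the same uniformizer $x_n$. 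I will show $\{-1\}^{n-1}\cupp\{x_n\}\neq 0$ in $H^n(F,\Z/2)$: this simultaneously forces $\{-1\}^{n-1}\neq 0$ in $H^{n-1}(F,\Z/2)$, so $s(F)=2^{n-1}$ by Proposition \ref{carrecoho}, and exhibits the sum of squares $x_n$ as not being a sum of $2^{n-1}$ squares, so $p(F)=2^{n-1}+1$. To prove the nonvanishing I compute the residue $\partial_{v_F}(\{-1\}^{n-1}\cupp\{x_n\})=\{-1\}^{n-1}\in H^{n-1}(\overline{F},\Z/2)$ and reduce to showing $s(\overline{F})\ge 2^{n-1}$, i.e.\ $\{-1\}^{n-1}\neq 0$ over $\overline{F}$. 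This is the technical heart: the Pfister form $P:=\langle 1,\dots,1\rangle$ ($2^{n-1}$ entries) must stay anisotropic over $\overline{K}(\sqrt{-\overline{f}})$. By the subform criterion for isotropy under a quadratic extension, $P$ becomes isotropic iff it contains a subform similar to the norm form $\langle 1,\overline{f}\rangle$; by roundness of Pfister forms this holds iff $\langle 1,\overline{f}\rangle\hookrightarrow P$, i.e.\ iff $\overline{f}$ is a nonzero value of the pure part of $P$, i.e.\ iff $\overline{f}$ is a sum of at most $2^{n-1}-1$ squares in $\overline{K}$ — which is exactly what our choice of $\overline{f}$ excludes.

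The main obstacle, and the reason the argument is delicate, is the selection of the valuation: one must use the order valuation so that the residue field is the \emph{rational} function field $\R(x_1,\dots,x_{n-1})$ (Pythagoras number $2^{n-1}$), since the more obvious $x_n$-adic valuation has residue field $\R((x_1,\dots,x_{n-1}))$, whose Pythagoras number is only $\le 2^{n-2}$ and yields nothing useful. A second subtlety, specific to the Pythagoras statement in (ii), is that the extra square is \emph{not} visible after specialization — over $\overline{F}$ one expects $H^n(\overline{F},\Z/2)=0$, so $\{-1\}^{n-1}\cupp\{\overline{a}\}$ vanishes there for every $\overline{a}$; it is precisely the uniformizer class $\{x_n\}$, trivial on the residue field but contributing to $H^n(F,\Z/2)$ through the residue map, that carries the jump from $s(F)=2^{n-1}$ to $p(F)=2^{n-1}+1$.
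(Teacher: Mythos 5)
Your proof is correct, but it takes a genuinely different route from the paper's. For (i) the paper simply cites Hu (\cite[Corollary 2.3]{Hupyth}), whereas you reprove it from scratch; your two key choices are sound: the order (blow-up) valuation, whose residue field is the \emph{rational} function field $\R(x_1,\dots,x_{n-1})$, and the leading-form no-cancellation argument, which works precisely because that residue field is formally real. For (ii) the divergence is more substantial. The paper realizes $L:=\R(x_1,\dots,x_{n-1})[\sqrt{-f}]$ geometrically: it takes a smooth projective model $Z$ of $L$ with $Z(\R)=\varnothing$, forms the cone $C$ over $Z$ with vertex $p$, sets $F:=\Frac(\widehat{\cO_{C,p}})$ (finite over $\R((x_1,\dots,x_n))$ by Cohen's structure theorem), deduces that $F$ is not formally real from Proposition \ref{proplevel} applied to the blow-up of the vertex, and imports the lower bounds from the literature: $p(F)\geq s(L)+1$ by \cite[Proposition 4.3]{BGvG} and $s(L)\geq 2^{n-1}$ by \cite[Chapter 11, Theorem 2.7]{vieuxLam}. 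You instead take the explicit quadratic extension $F:=K(\sqrt{-f})$ of $K$ itself, where $f$ lifts the extremal element $\overline{f}$ along the order valuation, and you transport the lower bound cohomologically: the extension of the order valuation to $F$ is unramified with residue field $\overline{F}=\overline{K}(\sqrt{-\overline{f}\,})$, the residue of $\{-1\}^{n-1}\cupp\{x_n\}$ is $\{-1\}^{n-1}$ over $\overline{F}$, and this class is nonzero because $2^{n-1}\times\langle 1\rangle$ stays anisotropic over $\overline{F}$ (subform criterion for quadratic extensions, roundness of Pfister forms, Witt cancellation) --- in effect a self-contained re-proof of the cited theorem from \cite{vieuxLam}. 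Note that the two constructions are secretly parallel: both produce a finite extension of $K$ whose residue field at a divisorial valuation is the same field $L$; what differs is the packaging and the transfer mechanism (the valuation-theoretic inequality of \cite{BGvG} versus the Galois-cohomological residue map). Your route buys elementarity and self-containedness: no resolution of singularities, no cone or Cohen structure theorem, no appeal to Proposition \ref{proplevel}, to \cite{BGvG}, or to Hu; the paper's route is much shorter given the references it can invoke.

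Two small points you should make explicit to close the argument. First, passing from $\{-1\}^{n-1}\neq 0$ in $H^{n-1}(F,\Z/2)$ (i.e.\ $s(F)>2^{n-2}$) and $s(F)\leq 2^{n-1}$ to the equality $s(F)=2^{n-1}$ uses Pfister's theorem that the level of a field is a power of $2$. Second, $x_n$ qualifies as a witness for $p(F)\geq 2^{n-1}+1$ because it \emph{is} a sum of squares in $F$: since $F$ is not formally real and has characteristic $0$, every element of $F$ is a sum of squares. Both facts are standard and already quoted in the paper, so these are presentational gaps only.
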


\begin{proof}
(i) This was proven by Hu in \cite[Corollary 2.3]{Hupyth}.

(ii) Let $f\in\R(x_1,\dots,x_{n-1})$ be a sum of squares that is not a sum of $2^{n-1}-1$ squares in $\R(x_1,\dots,x_{n-1})$. The field $L:=\R(x_1,\dots,x_{n-1})[\sqrt{-f}]$ is such that $s(L)\geq 2^{n-1}$ by \cite[Chapter 11, Theorem~2.7]{vieuxLam}. Let $Z$ be a smooth projective integral variety over $\R$ with $\R(Z)=L$. Since $L$ is not formally real, one has $Z(\R)=\varnothing$ by Lemma \ref{gener}. Embed $Z$ in a real projective space, and consider the cone $C$ over $Z$ in this embedding with vertex $p\in C$. Define $A:=\widehat{\cO_{C,p}}$ and $F:=\Frac(A)$. 
By \cite[Theorems 15 and 16]{Cohen} (see also the footnote (19) in \emph{loc.\ cit.}), there exists an injection $\R[[x_1,\dots,x_n]]\subset A$ endowing $A$ with a structure of finite $\R[[x_1,\dots,x_n]]$-algebra; it follows that $F$ is a a finite extension of $\R((x_1,\dots,x_{n}))$.

  Let $\pi:X\to \Spec(A)$ be the blow-up of the closed point. The scheme $X$ is regular and the exceptional divisor of $\pi$ is isomorphic to $Z$. By Proposition \ref{proplevel}, $F$ is not formally real. As $L$ is the residue field of a valuation on $F$, \cite[Proposition~4.3]{BGvG} shows that $p(F)\geq s(L)+1\geq 2^{n-1}+1$. 
By \cite[XI, Theorem 5.6~(2)]{Lam}, one has $s(F)\geq p(F)-1\geq 2^{n-1}$. That these inequalities are in fact equalities follows from Theorem \ref{consmain}~(ii) and \cite[XI, Theorem 5.6~(2)]{Lam}.
\end{proof}

\bibliographystyle{plain}
\bibliography{localPythagoras}

\end{document}